\theoremstyle{empty}
\newcommand{\R}{\mathbb{R}}
\newcommand{\Z}{\mathbb{Z}}
\newcommand{\F}{\mathbb{F}}
\newcommand{\A}{{\mathbb{A}}}
\newcommand{\zs}{{\mathbf{z}}}
\newcommand{\ws}{{\mathbf{w}}}
\newcommand{\var}{\mathsf{u}}
\newcommand{\varv}{\mathsf{v}}
\newcommand{\alphas}{\mbox{\boldmath$\alpha$}}
\newcommand{\betas}{\mbox{\boldmath$\beta$}}
\newcommand{\gammas}{\mbox{\boldmath$\gamma$}}
\newcommand{\deltas}{\mbox{\boldmath$\delta$}}
\newcommand{\Sig}{\Sigma}
\newcommand{\Order}{\mathrm{Ord}}
\newcommand{\CFK}{\mathsf{CFL}}
\newcommand{\HFK}{\mathsf{HFL}}
\newcommand{\Torsion}{\mathrm{Tor}}
\numberwithin{equation}{section}
\newcommand{\br}{\mathrm{br}}
\newcommand{\Mhatt}{\widehat{\Mcal}}
\newcommand{\ra}{\rightarrow}
\newcommand{\lra}{\longrightarrow}
\newcommand{\lbr}{\llbracket}
\newcommand{\rbr}{\rrbracket}
\newcommand{\fmap}{\mathfrak{f}}
\newcommand{\gmap}{\mathfrak{g}}
\newcommand{\hmap}{\mathfrak{h}}
\newcommand{\jmap}{\mathfrak{j}}
\newcommand{\Abb}{\mathbb{A}}
\newcommand{\Acal}{\mathcal{A}}
\newcommand{\Kcal}{\mathcal{K}}
\newcommand{\Lcal}{\mathcal{L}}
\newcommand{\Mcal}{\mathcal{M}}
\newcommand{\Ucal}{\mathcal{U}}
\newcommand{\Image}{\mathrm{Im}}
\newtheorem{thm}{\bfseries\color{black!50!blue} Theorem}[section]
\newtheorem{ex}[thm]{\bfseries\color{black!50!blue} Example}
\def\endproof{\relax\ifmmode\expandafter\endproofmath\else
  \unskip\nobreak\hfil\penalty50\hskip.75em\hbox{}\nobreak\hfil\bull
  {\parfillskip=0pt \finalhyphendemerits=0 \bigbreak}\fi}
\def\endproofmath$${\eqno\bull$$\bigbreak}
\def\bull{\vbox{\hrule\hbox{\vrule\kern3pt\vbox{\kern6pt}\kern3pt
\vrule}\hrule}}
\newcommand\x{\mathbf x}
\newcommand\y{\mathbf y}
\newcommand\z{\mathbf z}
\newcommand\Ta{{\mathbb T}_{\alpha}}
\newcommand\Tb{{\mathbb T}_{\beta}}
\tikzset{commutative diagrams/.cd,
	mysymbol/.style={start anchor=center,end anchor=center,draw=none}
}
\begin{document}
	\title{Tangle replacements and  knot Floer homology torsions}%
	\author{Eaman Eftekhary}%
	\institution{\sf{School of Mathematics, Institute for Research in Fundamental Sciences (IPM), Tehran, Iran}}
	%----------------------------------------------------------------
	\maketitle
	\begin{abstract}
	We show that the torsion order $\Order(K)$ of a knot $K$ in knot Floer homology  gives a lower bound on the minimum number $n$ such that an oriented $(n+1)$-tangle replacement unknots $K$. 
	This generalizes earlier results by Alishahi and the author and by Juh\'asz, Miller and Zemke, that $\Order(K)$ is a lower bound for both the unknotting number $u(K)$ and  for $\br(K)-1$, where  $\br(K)$ denotes the bridge index of $K$.
	\end{abstract}

\section{Introduction} 
Alishahi and the author \cite{AE-2}, and independently Zemke \cite{Zemke-1}, associated Heegaard-Floer cobordisms maps to decorated cobordisms connecting pointed links (also see \cite{Ef-TQFT}). As a byproduct, lower  bounds for  the unknotting number $u(K)$ of a knot $K$, including the torsion order $\Order(K)$ of the minus knot Floer homology group $\HFK^-(K)$, were introduced by Alishahi and the author \cite{AE-cobordisms-v1,AE-unknotting}.   
It was further shown by the author that $\Order(K)$ is in fact a lower bound for the minimum number $u_q(K)$ of proper rational replacements needed for unknotting $K$ \cite{Ef-RTR}.  Juh\'asz, Miller and Zemke proved that $\Order(K)$ is also a lower bound for $\br(K)-1$, where $\br(K)$ denotes the bridge index of  $K$ \cite{JMZ}.  It is then natural to ask weather there is a common generalization of the results of \cite{Ef-RTR} and \cite{JMZ} on geometric features of a knot $K$ bounded below by $\Order(K)$?   The goal of this short paper is to provide an answer to the latter question. \\

To state our result, let us fix some notation and review some definitions.
For $i\in\Z^{\geq 0}$, let $J_i$ denote the intersection of the line $\{(0,\frac{i-1}{i})\}\times \R$ with the standard $3$-ball $B^3$  and set 
\begin{align*}
I_n=\cup_{i=0}^nJ_i,\quad 
\partial^\pm J_i=\Big(0,\frac{i-1}{i},\pm \frac{\sqrt{2i-1}}{i}\Big)\quad\text{and}\quad
\partial^\pm I_n=\cup_{i=0}^n\partial^\pm J_n.	
\end{align*}	
A {\emph{trivial $(n+1)$-tangle}} is the diffeomorphic image $(B, T)$ of $(B^3,I_n)$ in $\R^3$.  $\partial^\pm T$ is defined as the image of $\partial^\pm I_n$.
 If $(B,T)$ is the intersection of $B$ with a link $K$, a {\emph{tangle replacement}}
in $B$ is the replacement of $T$ with $f(T)$, where $f:B\ra B$ is a diffeomorphism keeping $\partial T$ invariant. A $2$-tangle replacement  is called a {\emph{rational replacement}}. If $K$ is oriented so that the induced orientation of $T$ is  from $\partial^-T$ to $\partial^+T$ and $f$ keeps either of $\partial^\pm T$  invariant, the tangle replacement  is called  {\emph{oriented}}.   Define the {\emph{tangle distance}}  $d_t(K,K')$ as the minimum $n\in\Z^+$ such that an $(n+1)$-tangle replacement  changes $K$ to $K'$. The  oriented tangle distance  $d_{ot}(K,K')$ is defined similarly.  \\

For an oriented pointed link $\Kcal=(K,\ws,\zs)$, let $\CFK(\Kcal)$ denote the corresponding link chain complex constructed from a Heegaard diagram $(\Sigma,\alphas,\betas,\ws,\zs)$, 
which is generated over $\F[\var,\varv]$ by the intersection points in $\Ta\cap\Tb$ (where $\F=\Z/2$) and is equipped with the differential  
\begin{align*}
	d:\CFK(\Kcal)\ra\CFK(\Kcal),\quad
	d(\x)&:=\sum_{\y\in\Ta\cap \Tb}\ \ \sum_{\substack{\phi\in\pi_2(\x,\y)\\ \mu(\phi)=1}}\#(\Mhatt(\phi))\var^{n_\ws(\phi)}\varv^{n_\zs(\phi)}\cdot \y,\quad\forall\ \x\in\Ta\cap\Tb.
\end{align*}
Following \cite{AE-1}, for a $\F[\var,\varv]$-algebra $\A$,  let $\HFK(\Kcal;\A)$ denote the homology of the chain complex 
\[\CFK(\Kcal;\A):=\CFK(\Kcal)\otimes_{\F[\var,\varv]}\A.\]
For $\A^-=\F[\var,\varv]/\langle \varv=0\rangle\simeq \F[\var]$ and $\A^==\F[\var,\varv]/\langle \varv=\var\rangle\simeq\F[\var]$ define 
\begin{align*}
\HFK^-(\Kcal):=\HFK(\Kcal;\A^-)\quad\text{and}\quad
\HFK^=(\Kcal):=\HFK(\Kcal;\A^=).	
\end{align*}	 
Note that $\HFK^-(\Kcal)$ is bi-graded (by the homological and the Alexander gradings) while $\HFK^=(\Kcal)$ is only equipped with a homological grading. An element $\x$ of $\HFK^-(\Kcal)$ or $\HFK^=(\Kcal)$ is called a torsion element of order $k$ if $\var^{k-1}\x\neq 0$, while $\var^k\x=0$. The torsion orders $\Order(K)$ and $\Order'(K)$, which do not depend on the decoration $\Kcal$ of $K$, are defined as the maximum order of a torsion element in $\HFK^-(\Kcal)$ and $\HFK^=(\Kcal)$, respectively.

\begin{thm}\label{thm:main-intro}
Given two links $K$ and $K'$ in $S^3$ 
\begin{align*}
d_{ot}(K,K')+\Order(K')\geq \Order(K)
\quad\text{and}\quad d_{t}(K,K')+\Order'(K')\geq \Order'(K). 	
\end{align*}		
\end{thm}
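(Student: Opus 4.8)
The plan is to reduce both inequalities to a single statement about cobordism maps and then conclude by an elementary module argument. I treat the first inequality; the second follows verbatim after replacing $\A^-$ by $\A^=$, $\HFK^-$ by $\HFK^=$, $\Order$ by $\Order'$, $d_{ot}$ by $d_t$, and dropping all orientation conditions. So fix links $K,K'$ with $d_{ot}(K,K')=n$, realized by an oriented $(n+1)$-tangle replacement inside a ball $B\subset S^3$; thus $K'=K$ outside $B$, and $T:=K\cap B$, $T':=K'\cap B$ are trivial $(n+1)$-tangles with $\partial T=\partial T'$ and with compatible boundary orientations. I claim that, having chosen a decoration $\Kcal$ of $K$, one can build a decoration $\Kcal'$ of $K'$ and $\A^-$-linear maps
\[
\phi\colon \HFK^-(\Kcal)\longrightarrow \HFK^-(\Kcal'),\qquad \psi\colon \HFK^-(\Kcal')\longrightarrow \HFK^-(\Kcal)
\]
with $\psi\circ\phi=\var^n\cdot\mathrm{id}$ on $\HFK^-(\Kcal)$ (it actually suffices that $\psi\circ\phi-\var^n\cdot\mathrm{id}$ be divisible by $\var^{n+1}$). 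Granting this, assume $\Order(K)>n$ and pick $\x\in\HFK^-(\Kcal)$ with $\var^{\Order(K)-1}\x\ne0$ and $\var^{\Order(K)}\x=0$; then $\y:=\phi(\x)$ satisfies $\var^{\Order(K)}\y=\phi(\var^{\Order(K)}\x)=0$, while $\psi(\var^{\Order(K)-1-n}\y)=\var^{\Order(K)-1-n}(\psi\circ\phi)(\x)=\var^{\Order(K)-1}\x\ne0$, so $\var^{\Order(K)-1-n}\y\ne0$. Hence $\y$ is a torsion element of $\HFK^-(\Kcal')$ of order at least $\Order(K)-n$, giving $\Order(K')\ge\Order(K)-n=\Order(K)-d_{ot}(K,K')$.

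The maps $\phi,\psi$ will be cobordism maps from the decorated link cobordism TQFT of \cite{AE-2,Zemke-1,Ef-TQFT}. To build the cobordism, decompose the replacement into band moves: a trivial $(n+1)$-tangle is obtained from a single boundary-parallel arc by $n$ band attachments, so running the band system of $T$ backwards and that of $T'$ forwards produces a surface $\Sigma_0\subset B\times[0,1]$ from $T$ to $T'$ with $2n$ saddles, passing through the trivial $1$-tangle $T_\ast$ at the middle level; in the oriented case the bands can be taken to respect orientations. Capping $\Sigma_0$ off with the product $(K\setminus T)\times[0,1]$ and placing the markings of $\Kcal,\Kcal'$ away from $B$ yields an oriented decorated link cobordism $\Sigma\colon\Kcal\to\Kcal'$; let $\overline{\Sigma}\colon\Kcal'\to\Kcal$ be its reverse and set $\phi:=F_\Sigma$, $\psi:=F_{\overline{\Sigma}}$. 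The crux is to identify the composite cobordism $\overline{\Sigma}\circ\Sigma\colon\Kcal\to\Kcal$: it is a product outside $B$, and inside $B\times[0,2]$ it is $\Sigma_0$ followed by $\overline{\Sigma_0}$. A careless Morse cancellation of the $2n$ middle saddles would recover the identity cobordism of $K$; the point is that the two halves come from the two \emph{distinct} band systems of $T$ and of $T'$, and the correct normal form of $\overline{\Sigma}\circ\Sigma$ is the identity cobordism of $K$ with $n$ tubes attached along $n$ disjoint arcs in $B$. By the neck-cutting (tube) relations of the cobordism theory, each such tube multiplies the cobordism map by $\var$ when it runs parallel to $K$ near a $\ws$-marking; the orientation hypothesis forces all $n$ tubes to be of this $\ws$-type, so $\psi\circ\phi=F_{\overline{\Sigma}\circ\Sigma}=\var^n\cdot\mathrm{id}$. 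Over $\A^=$, where $\varv=\var$, a tube of the other ($\zs$-)type also contributes $\var$, which is precisely why the second inequality holds with no orientation assumption.

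The main obstacle is this last step: fixing the normal form of $\overline{\Sigma}\circ\Sigma$ and evaluating its map as $\var^n$, rather than $\mathrm{id}$ (suggested by the careless cancellation) or $0$ (which a mis-oriented $\zs$-type tube would give over $\A^-$). Concretely this needs (i) a normal form for trivial $(n+1)$-tangles and for cobordisms between them, so that $\Sigma$ and $\overline{\Sigma}$ are well defined up to the ambiguities absorbed by the invariance statements of \cite{AE-2,Zemke-1,Ef-TQFT}; (ii) careful bookkeeping of the $n$ tubes produced by superposing the two band systems, together with the positions of the $\ws$- and $\zs$-markings relative to them; and (iii) the evaluation of each tube's contribution, via Zemke's neck-cutting formulas or, equivalently, an explicit count in a standard Heegaard diagram. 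The remaining ingredients — functoriality and invariance of the cobordism maps, their $\A^-$- and $\A^=$-linearity, and the module computation above — are routine given \cite{AE-2,Zemke-1,Ef-TQFT,AE-1}. Finally, applying the inequality with $K$ and $K'$ interchanged (the inverse replacement realizes the same distance) yields the symmetric estimate $|\Order(K)-\Order(K')|\le d_{ot}(K,K')$, and taking $K'$ to be an unknot recovers the bounds $\Order(K)\le u(K)$, $\Order(K)\le u_q(K)$ and $\Order(K)\le\br(K)-1$ of \cite{AE-unknotting,Ef-RTR,JMZ}.
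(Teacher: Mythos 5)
Your reduction of the theorem to the existence of $\F[\var]$-linear maps $\phi,\psi$ with $\psi\circ\phi=\var^{k}\cdot\mathrm{id}$ for some $k\leq n$, and the module-theoretic argument deducing $\Order(K')\geq\Order(K)-n$ from such maps, are both correct and match the structure of the paper (Theorems~\ref{thm:PTR} and \ref{thm:TR} assert exactly such maps, and Theorem~\ref{thm:main-intro} is read off from them). Where you diverge is the construction of $\phi,\psi$: the paper builds a Heegaard triple $(\Sig,\alphas,\betas,\gammas,\ws,\zs)$, obtains $\fmap\colon\HFK^-(\Kcal)\otimes\HFK^-(\Lcal)\to\HFK^-(\Kcal')$ and $\gmap$ in the reverse direction where $\Lcal$ decorates the $(n+1)$-bridge link $L=T\cup T'$, and uses Heegaard-quadruple associativity to express $\gmap_\z\circ\fmap_\y$ through the map $\hmap$ of the ribbon cobordism $L\#\overline L\to U$; the factor $\var^{k_L}$ with $k_L\leq n$ then comes from \cite[Proposition 4.1]{JMZ} or \cite[Lemma 2.1]{AE-unknotting}. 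You instead try to build a decorated cobordism from $K$ to $K'$ directly, as a surface $\Sigma_0\subset B\times[0,1]$ from $T$ to $T'$ ``passing through the trivial $1$-tangle $T_*$ at the middle level,'' capped off by a product outside $B$.

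That middle slice cannot exist, and this is a genuine gap. A cobordism in $B\times[0,1]$ that is a product near $\partial B\times[0,1]$ has, at every regular level $t$, a tangle in $B$ with the same $2(n+1)$ boundary points, hence with at least $n+1$ arc components; a band attached inside $B$ never changes the boundary and therefore never reduces the number of arcs. A trivial $1$-tangle has only two boundary points, so it is not a possible level set of $\Sigma_0$. What {\emph{can}} be reduced to an unlink by $n$ bands is the closed $(n+1)$-bridge link $L=T\cup T'$ obtained by doubling the ball — which is precisely the JMZ construction and precisely what the paper feeds into the Heegaard-triple argument rather than a direct $K\to K'$ cobordism. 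Beyond this, even granted some valid $\Sigma$, identifying $\overline\Sigma\circ\Sigma$ as ``the identity with $n$ tubes, all of $\ws$-type'' is not a routine Morse cancellation: the number of tubes is controlled by $\chi(\Sigma)$, and determining their $\ws$- versus $\zs$-type (so that the composite is $\var^{n}\cdot\mathrm{id}$ over $\A^-$ rather than $0$) is exactly the bookkeeping the paper offloads to the quadruple diagram and the $L\#\overline L\to U$ lemma. Your observation that over $\A^=$ the type distinction vanishes — hence no orientation hypothesis is needed for the second inequality — is the right heuristic, but as written the central geometric step of the proof is missing.
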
	 

Rational unknotting is considered by Lines \cite{Lines} and McCoy \cite{McCoy}. If a tangle replacement changes $K$ to $K'$, surgery on a corresponding solid handlebody  changes the double cover $\Sigma(K)$ of $S^3$, branched along $K$, to $\Sigma(K')$. Thus, $d_t(K,K')$ is bounded below by the minimum genus $g(K,K')$ of a solid handlebody $S$ in $\Sigma(K)$ so that surgery on $S$ gives $\Sigma(K')$. This observation, called the {\emph{Montesinos trick}}, is used by McCoy and Zenter to study proper rational unknotting  \cite{MZ-rr}. 
Theorem~\ref{thm:main-intro} suggests comparing $g(K,U)$ and the torsion orders, where $U$ denotes the unknot.

\section{Proof of the main theorem}\label{sec:main-theorem}
Let $K\subset \R^3$ be an oriented knot or link and fix the marked points $\ws$ and $\zs$ on $K$ so that they have nonempty intersection with and alternate on every connected component of $K$. Let us further assume that $|\ws|=|\zs|=N+1$ for a positive integer $N$.
The  bi-graded chain complex $\CFK^-(\Kcal)$ associated with the pointed knot $\Kcal=(K,\ws,\zs)$ is then  chain homotopy equivalent to a chain complex $(C_\Kcal,d_\Kcal)$ with the property that $C_\Kcal$ is generated by the generators $\{\x_I\}_{I\subset \{1,\ldots,n\}}$, indexed by the subsets $I$ of $\{1,\ldots,N\}$, as well as the generators $\y_1,\ldots,\y_m,\z_1,\ldots,\z_m$, so that 
\begin{align*}
	d_\Kcal(\x_I)=0,\quad\forall\ I\subset\{1,\ldots,N\}\quad\text{and}\quad
	d_\Kcal(\y_j)=\z_j,\ \ d_\Kcal(\z_j)=0,\quad\forall\ j=1,\ldots,m.	
\end{align*}	
The bigrading of $\x_I$ is given by $(-|I|,-a_I)$ for the integers $a_I\in \Z$, while the bigrading of $\z_j$ is given by $(-h_j,-a_j)\in\Z^2$. It then follows that the bigrading of $\y_j$ is $(1-h_j,k_j-a_j)$. If this is the case, the homology group $\HFK^-(\Kcal)$ is given as
\begin{align*}
	\HFK^-(\Kcal)=\Big(\bigoplus_{I\subset \{1,\ldots,N\}}	\F[\var]\lbr |I|,a_I\rbr\Big)\oplus
	\Big(\bigoplus_{j=1}^m\frac{\F[\var]}{\var^{k_j}=1}\lbr h_j,a_j\rbr\Big)=:
	\Abb(\Kcal)\oplus\Torsion(\Kcal).
\end{align*}	
The torsion subgroup $\Torsion(\Kcal)$ is canonically associated with $\Kcal$ as a subgroup of $\HFK^-(\Kcal)$, while the torsion-free part $\Abb(\Kcal)$ is defined as the quotient $\HFK^-(\Kcal)/\Torsion(\Kcal)$. Therefore, the direct sum on the right-hand-side of the above equation is not canonical. Note that $\Abb(\Kcal)$ admits a distinguished top generator $\Theta_{\Kcal}$ (with respect to the homological grading). We will usually regard $\Theta_{\Kcal}$ as an element in $\HFK^-(\Kcal)$ which is well-defined up to torsions (i.e. elements in $\Torsion(\Kcal)$). \\

Let us further assume that the intersection of a ball $B\subset \R^3$ with an oriented link $K$ is the trivial $(n+1)$-tangle $T$ and that $\ws$ and $\zs$ do not intersect $B^\circ$, while $\partial^-T\subset\ws$ and $\partial^+T\subset \zs$. Thus every component $T_i$ of $T$ (for $i=0,1,\ldots,n$) connect a  point  $w_i\in\ws\cap\partial B$ to a point  $z_i\in\zs\cap\partial B$. 
We assume that the oriented tangle replacement in the following theorem is 
performed on $(B,T)$.

\begin{thm}\label{thm:PTR}
	If the pointed link $\Kcal'=(K',\ws,\zs)$ is obtained from the pointed link $\Kcal=(K,\ws,\zs)$ by an oriented $(n+1)$-tangle replacement, there are $\F[\var]$-homomorphisms 
	\begin{align*}
		\fmap_{\Kcal\ra \Kcal'}:\HFK^-(\Kcal)\lra \HFK^-(\Kcal')\quad\text{and}\quad	
		\fmap_{\Kcal'\ra \Kcal}:\HFK^-(\Kcal')\lra \HFK^-(\Kcal).	
	\end{align*}	
	so that for some $k,k'\in\{0,1,\ldots,n\}$ we have 
	\begin{align*}
		\fmap_{\Kcal'\ra \Kcal}\circ\fmap_{\Kcal\ra \Kcal'}=\var^{k}\cdot Id\quad\text{and}\quad
		\fmap_{\Kcal\ra \Kcal'}\circ\fmap_{\Kcal'\ra \Kcal}=\var^{k'}\cdot Id.	
	\end{align*}	
\end{thm}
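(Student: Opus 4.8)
The plan is to realize the oriented tangle replacement by an explicit sequence of Heegaard moves and count how much the variable $\var$ has to be introduced along the way. First I would choose a Heegaard diagram for $\Kcal$ that is adapted to the ball $B$: since $(B,T)$ is a trivial $(n+1)$-tangle whose endpoints lie among the marked points, I can arrange a multi-pointed doubly-pointed Heegaard diagram in which the part of $\Sigma$ lying over $B$ is a standard genus-$0$ (or low-genus) piece carrying the $n{+}1$ strands $T_0,\dots,T_n$, each strand running between $w_i$ and $z_i$. Replacing $T$ by $f(T)$ amounts to modifying $\Sigma$ (and the $\betas$-curves) only inside this standard piece. Following the construction of cobordism maps for decorated link cobordisms from \cite{AE-2,Zemke-1} (see also \cite{Ef-TQFT}), the elementary pieces of such a replacement — births/deaths of unknotted components, band surgeries compatible with the orientation, and isotopies — each induce $\F[\var,\varv]$-linear chain maps; restricting to $\A^-$ and passing to homology gives the maps $\fmap_{\Kcal\ra\Kcal'}$ and $\fmap_{\Kcal'\ra\Kcal}$. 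Because the replacement is \emph{oriented} (the diffeomorphism $f$ fixes $\partial^-T$ or $\partial^+T$ setwise and respects the orientation of the strands), the elementary pieces can be chosen so that only $\var$-powers, never $\varv$-powers, appear: this is precisely the point where orientedness is used, and it is what keeps everything in the category of $\F[\var]$-modules rather than just $\F[\var,\varv]$-modules.

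Next I would establish the composition identities. The composite $\fmap_{\Kcal'\ra\Kcal}\circ\fmap_{\Kcal\ra\Kcal'}$ is the map associated to the concatenation of the replacement with its inverse, i.e.\ to a decorated cobordism from $\Kcal$ to itself supported in $B\times I$. The key structural input is that such a "doubled" cobordism is, up to the moves that do not change the map, a trivial product cobordism with some extra compressing/decompressing happening in the standard piece over $B$; by the functoriality (composition law) of the cobordism maps, the composite equals the map of this doubled cobordism, which in turn equals multiplication by a monomial $\var^{k}$ for some $k\ge 0$ — the exponent $k$ records the total algebraic weight of the $\ws$-points swept out. The bound $k\le n$ comes from the fact that the cobordism is built from the $n{+}1$ strands: each strand contributes at most one unit of $\var$-weight in the round trip, and one strand (the one whose endpoint is fixed by $f$, or a chosen basepoint strand) can be arranged to contribute nothing, so $k\in\{0,\dots,n\}$. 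The same argument with the roles of $\Kcal$ and $\Kcal'$ reversed gives $\fmap_{\Kcal\ra\Kcal'}\circ\fmap_{\Kcal'\ra\Kcal}=\var^{k'}\cdot Id$ with $k'\in\{0,\dots,n\}$.

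The main obstacle I anticipate is making the "doubled cobordism is a product up to weight $\var^k$ with $k\le n$" claim precise and sharp. Two sub-issues: (i) one must verify that the concatenation of the replacement cobordism with its reverse is related to the product cobordism by exactly the neck-stretching/handle-cancellation moves under which the cobordism map is only \emph{multiplied} by a power of $\var$ (and never altered more seriously), which requires a careful bookkeeping of how births, deaths and saddles interact with the basepoints $\ws$ — this is where the detailed model of the standard tangle piece $I_n=\cup_{i=0}^n J_i$ and its boundary points $\partial^\pm J_i$ from the introduction is needed; and (ii) one must check the uniformity of the exponent: that the same $\var^k$ works on all of $\HFK^-(\Kcal)$, not just on the torsion-free part, which follows because the composite is a map of $\F[\var]$-modules induced by an honest chain map and the identity $\var^k\cdot Id$ can be checked after tensoring with the fraction field $\F(\var)$ on the free part and then propagated. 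Once Theorem~\ref{thm:PTR} is in hand, Theorem~\ref{thm:main-intro} follows by the standard torsion-order argument: if $\x\in\Torsion(\Kcal)$ has order $\Order(K)$, then $\fmap_{\Kcal\ra\Kcal'}(\x)$ has order at least $\Order(K)-n$ in $\HFK^-(\Kcal')$ (using $\var^k\cdot\x=\fmap_{\Kcal'\ra\Kcal}\fmap_{\Kcal\ra\Kcal'}(\x)\ne 0$ when $k<\Order(K)$), giving $n+\Order(K')\ge\Order(K)$, and minimizing over replacements realizing $d_{ot}(K,K')$ yields the claim; the unoriented case with $\Order'$ is identical, working over $\A^=$ instead of $\A^-$.
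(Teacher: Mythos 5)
Your overall framework (cobordism maps, functoriality, orientedness controlling the $\var$- vs.\ $\varv$-powers) is the right setting, but the central step of the theorem is asserted rather than proved, and the heuristic you give for it does not close the gap. Concretely: you claim that the composite $\fmap_{\Kcal'\ra\Kcal}\circ\fmap_{\Kcal\ra\Kcal'}$ is the map of a doubled cobordism supported in $B\times I$, and that this map ``is a product up to weight $\var^k$ with $k\le n$,'' justified by the slogan ``each strand contributes at most one unit of $\var$-weight in the round trip.'' This is exactly what needs to be shown, and it is not at all automatic from the structure of a doubled cobordism: composing a decorated band/saddle cobordism with its reverse does not, in general, yield multiplication by a monomial in $\var$, and the exponent bound $k\le n$ has no obvious local ``one unit per strand'' justification in the decomposition-into-elementary-pieces picture. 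You flag this yourself as sub-issue (i), but the proposal does not supply the missing argument, and it is not clear that a direct neck-stretching/handle-cancellation bookkeeping along your lines can be carried out.

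The paper's proof closes this gap by a specific algebraic device that your outline lacks. Rather than decomposing the replacement into elementary moves, it uses Heegaard triples $(\Sigma,\alphas,\betas,\gammas)$ and $(\Sigma,\alphas,\gammas,\betas)$ to define the maps $\fmap$ and $\gmap$ directly, and a Heegaard quadruple degeneration argument to obtain the factorization
\[
\gmap_\z\circ\fmap_\y \;=\; \jmap\big(\cdot\otimes\hmap(\y\otimes\z)\big),
\]
where $\hmap\colon \HFK^-(\Lcal)\otimes\HFK^-(\overline{\Lcal})\to\HFK^-(\Ucal)$ is the map of a ribbon cobordism from $L\#\overline{L}$ to the $(n+1)$-component unlink, and $L=T\cup T'$ is the $(n+1)$-bridge link obtained by gluing the two tangles. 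The exponent bound $k\in\{0,\dots,n\}$ then comes from the Juh\'asz--Miller--Zemke bridge-index result (\cite[Proposition~4.1]{JMZ}, or \cite[Lemma~2.1]{AE-unknotting}), which produces a two-sided inverse $\hmap'$ with $\hmap\circ\hmap'=\hmap'\circ\hmap=\var^n\cdot Id$, together with the observation that $\hmap$ kills torsion and sends $\Theta_\Lcal\otimes\Theta_{\overline{\Lcal}}$ to $\var^{k_L}\cdot\Theta$ for some $k_L\le n$. This factorization through the bridge link $L$ and the appeal to the JMZ result are the genuine content of the proof, and they are what your ``round-trip weight'' heuristic is silently standing in for. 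Without them, the proposal does not establish the composition identity or the bound on $k$.
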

\begin{proof}
Suppose that $\Kcal'$  is obtained from $\Kcal$ by replacing  $(B,T)$ with  $(B,T')$, as discussed above. Let $D_i$ and $D'_i$ denote disjoint small open disks on $\partial B$ containing  $w_i$ and $z_i$ on their boundary, respectively. We may then construct a link diagram for $\Kcal$ with the following properties (see \cite[Section 2]{Ef-RTR} for the construction in the case where $n=1$). The Heegaard surface $\Sig$ is the union of 
 \begin{align*}
 S=\partial B-\cup_{i=0}^n D_i-\cup_{i=0}^n D'_i	
 \end{align*}	
with another sub-surface (with boundary) which is denoted by $\Sig'$, so that 
 \begin{align*}
	\Sig'\cap{S}=\big(\cup_{i=0}^n \partial D_i\big)\cup\big(\cup_{i=0}^n \partial D'_i\big).	
\end{align*}	
The link diagram $H=(\Sig,\alphas,\betas,\ws,\zs)$ is constructed so that 
$\betas=\betas_0\cup\{\beta_1,\ldots,\beta_n\}$ with $\betas_0$ completely disjoint from ${S}$ and $\beta_1,\ldots,\beta_n\subset S^\circ$ cutting $S$ into $n+1$ connected components
$S_0,\ldots,S_n$ so that $S_i$ includes $w_i$ and $z_i$. We may then construct a link diagram for $\Kcal'$ of the form
\begin{align*}
H'=(\Sig,\alphas,\gammas=\gammas_0\cup\{\gamma_1,\ldots,\gamma_n\},\ws,\zs)	
\end{align*}	  
where $\gammas_0$ is obtained from $\betas_0$ by a small Hamiltonian isotopy away from the markings and $\gamma_1,\ldots,\gamma_n$ are simple closed disjoint curves on $S$ cutting it into connected components $S'_0,\ldots,S'_n$ so that $w_i,z_\sigma(i)\in S'_i$ for $i=0,\ldots,n$ and a permutation $\sigma\in S_{n+1}$ of $\{0,\ldots,n\}$.\\

The Heegaard triple $(\Sig,\alphas,\betas,\gammas,\ws,\zs)$ determines a  cobordism from the split union of $K$ and the $(n+1)$-bridge link $L=T\cup T'$ to $K'$, and a decoration of the aforementioned cobordism. Here, $L$ is regarded as a link in the sphere obtained by gluing two copies of $B$ along their sphere boundaries from the union of $T$ in one copy of $B$ and $T'$ in the second copy of $B$.  
If we set $\ws_0=\{w_0,\ldots,w_n\}$ and $\zs_0=\{z_0,\ldots,z_n\}$, and $\Lcal=(L,\ws_0,\zs_0)$, 
the aforementioned Heegaard triple determines a map
\begin{align*}
	\fmap:\HFK^-(\Kcal)\otimes_{\F[\var]}\HFK^-(\Lcal)\lra \HFK^-(\Kcal').
\end{align*}
Given $\y\in\HFK^-(\Lcal)$, we denote the map $\fmap(\cdot\otimes\y):\HFK^-(\Kcal)\lra \HFK^-(\Kcal')$ by $\fmap_\y$.
Similarly, we may use the Heegaard triple $(\Sig,\alphas,\gammas,\betas,\ws,\zs)$ to obtain a second $\F[\var]$-homomorphism
\begin{align*}
	\gmap:\HFK^-(\Kcal')\otimes_{\F[\var]}\HFK^-(\overline{\Lcal})\lra \HFK^-(\Kcal),
\end{align*}
where $\overline{\Lcal}=(\overline{L},\ws_0,\zs_0)$ and $\overline{L}=T'\cup T$ denotes the mirror of $L$ obtained by gluing the copy of $T'$ in the first copy of $B$ to the copy of $T$ in the second copy of $B$. Again, for every $\z\in\HFK^-(\overline{\Lcal})$ we obtain a corresponding $\F[\var]$-homomorphism
\begin{align*}
\gmap_\z=\gmap(\cdot\otimes\z):\HFK^-(\Kcal')\lra \HFK^-(\Kcal).	
\end{align*}	
Let $\deltas$ denote a copy of $\betas$ which is moved by a small Hamiltonian isotopy. Using the diagrams $(\Sig,\alphas,\betas,\deltas;\ws,\zs)$ and  $(\Sig,\betas,\gammas,\deltas,\ws,\zs)$, one obtains the  $\F[\var]$-homomorphisms
\begin{equation}\label{eq:maps-from-quadruple}
\begin{split}	
	&\jmap:\HFK^-(\Kcal)\otimes_{\F[\var]} \HFK^-(\Ucal)
	%\left(\F[\var]\oplus \F[\var]\lbr1,0\rbr\right)^{n}
	\lra \HFK^-(\Kcal)\quad\text{and}\\
	&\hmap:\HFK^-(\Lcal)\otimes_{\F[\var]}\HFK^-(\overline{\Lcal})\lra \HFK^-(\Ucal)=	\left(\F[\var]\oplus \F[\var]\lbr1,0\rbr\right)^{n},
\end{split}
\end{equation}	
where $\Ucal=(U,\ws_0,\zs_0)$ and $U$ is the $(n+1)$-component unlink with component $U_i$ hosting the marked points $w_i$ and $z_i$. On the other hand, the chain homotopy map coming from the Heegaard quadruple $(\Sig,\alphas,\betas,\gammas,\deltas;\ws,\zs)$, and an analysis of the degenerations of all holomorphic rectangles of index $0$ for the aforementioned Heegaard quadruple may be used to show that 
\begin{align}\label{eq:composition-of-maps}
&\gmap_\z(\fmap_\y(\x))=\jmap(\x\otimes\hmap(\y\otimes\z)),\quad
\forall\ \x\in\HFK^-(\Kcal)
,\ \y\in\HFK^-(\Lcal),\ \z\in\HFK^-(\overline{\Lcal}).
\end{align}	
If $\Theta=\Theta_{\Ucal}$ denotes the top generator  in $\HFK^-(\Ucal)=(\F[\var]\oplus \F[\var]\lbr1,0\rbr)^n$,
\begin{align*}
 \jmap(\cdot\otimes\Theta):\HFK^-(\Kcal)\ra \HFK^-(\Kcal)
\end{align*}
is the identity. The map $\hmap$ from (\ref{eq:maps-from-quadruple}) corresponds to a decoration of the ribbon cobordism from $L\#\overline{L}$ to the unlink $U$, as in \cite[Section 1.6]{JMZ}.  Here, in forming the connected sum $L\#\overline{L}$ we connect a point on a component in $L$ to a corresponding point in $\overline{L}$ by the connected sum band. Since two trivial $(n+1)$-tangles are glued together to form $L$, it follows that $L$ is an $(n+1)$-bridge link.
 The $(n+1)$-bridge presentation of $L$ gives a ribbon cobordism from $L\#\overline{L}$ to $U$ which is constructed by attaching $n$ bands to it, as discussed in \cite[Section 1.6]{JMZ} and illustrated in Figure~\ref{fig:n-band-cobordism} in a simple example. Therefore, \cite[Proposition 4.1]{JMZ} or the argument of \cite[Lemma 2.1]{AE-unknotting} imply that the aforementioned cobordism gives the maps
\begin{align*}
	&\hmap:\HFK^-(\Lcal)\otimes_{\F[\var]}\HFK^-(\overline{\Lcal})\lra \HFK^-(\Ucal)=	\left(\F[\var]\oplus \F[\var]\lbr1,0\rbr\right)^{n}\quad\text{and}\\
	&\hmap':\HFK^-(\Ucal)=	\left(\F[\var]\oplus \F[\var]\lbr1,0\rbr\right)^{n}\lra \HFK^-(\Lcal)\otimes_{\F[\var]}\HFK^-(\overline{\Lcal}),
\end{align*}	
so that $\hmap\circ\hmap'$ and $\hmap'\circ\hmap$ are both equal to $\var^n\cdot Id$. Since  $\Image(\hmap)$ is in a torsion-free module,  $\hmap(\y\otimes\z)=0$ if either of $\y$ or $\z$ is a torsion element. In particular, $\hmap$ factors through $\Abb(\Lcal)\otimes_{\F[\var]}\Abb(\overline{\Lcal})$, and 
\begin{align}\label{eq:composition-thetas}
\hmap\left(\Theta_{\Lcal}\otimes\Theta_{\overline{\Lcal}}\right)=\var^{k_{L}}\cdot \Theta \quad\text{for some }k_L\in\{0,1,\ldots,n\}.	
\end{align}	  

\begin{figure}
	\def\svgwidth{0.65\textwidth}
	{\small{
			\begin{center}
				%% Creator: Inkscape 1.2.1 (9c6d41e, 2022-07-14), www.inkscape.org
%% PDF/EPS/PS + LaTeX output extension by Johan Engelen, 2010
%% Accompanies image file '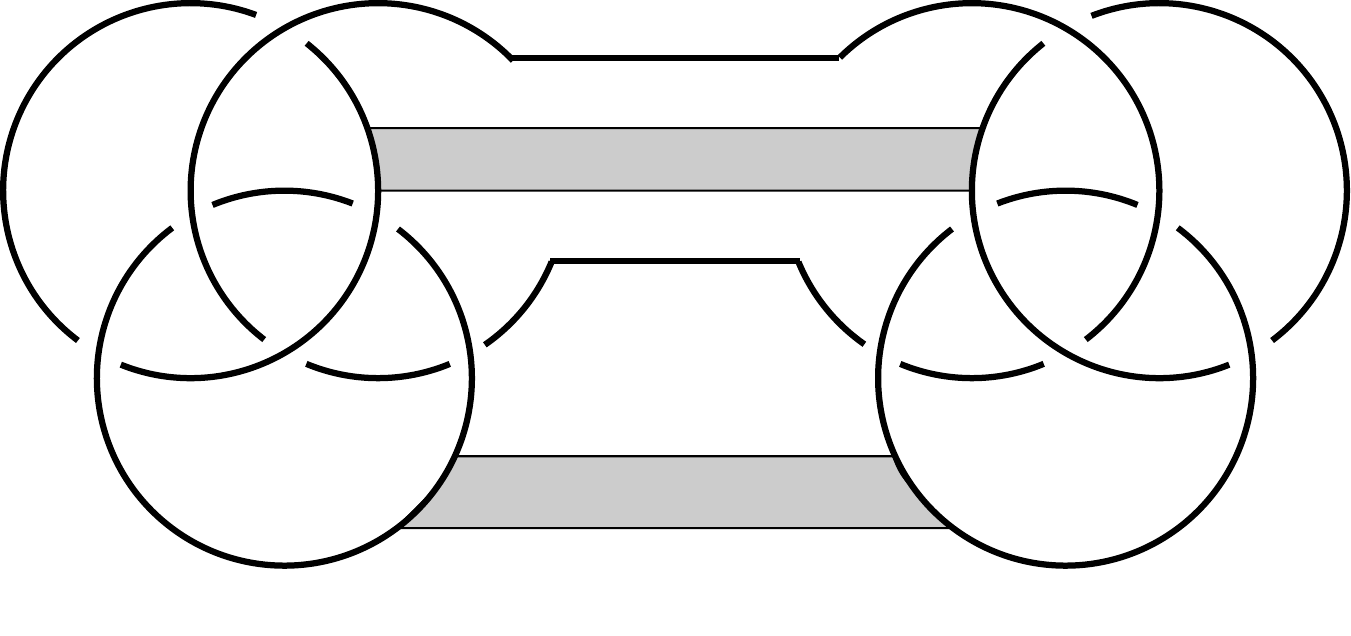' (pdf, eps, ps)
%%
%% To include the image in your LaTeX document, write
%%   \input{<filename>.pdf_tex}
%%  instead of
%%   \includegraphics{<filename>.pdf}
%% To scale the image, write
%%   \def\svgwidth{<desired width>}
%%   \input{<filename>.pdf_tex}
%%  instead of
%%   \includegraphics[width=<desired width>]{<filename>.pdf}
%%
%% Images with a different path to the parent latex file can
%% be accessed with the `import' package (which may need to be
%% installed) using
%%   \usepackage{import}
%% in the preamble, and then including the image with
%%   \import{<path to file>}{<filename>.pdf_tex}
%% Alternatively, one can specify
%%   \graphicspath{{<path to file>/}}
%% 
%% For more information, please see info/svg-inkscape on CTAN:
%%   http://tug.ctan.org/tex-archive/info/svg-inkscape
%%
\begingroup%
  \makeatletter%
  \providecommand\color[2][]{%
    \errmessage{(Inkscape) Color is used for the text in Inkscape, but the package 'color.sty' is not loaded}%
    \renewcommand\color[2][]{}%
  }%
  \providecommand\transparent[1]{%
    \errmessage{(Inkscape) Transparency is used (non-zero) for the text in Inkscape, but the package 'transparent.sty' is not loaded}%
    \renewcommand\transparent[1]{}%
  }%
  \providecommand\rotatebox[2]{#2}%
  \newcommand*\fsize{\dimexpr\f@size pt\relax}%
  \newcommand*\lineheight[1]{\fontsize{\fsize}{#1\fsize}\selectfont}%
  \ifx\svgwidth\undefined%
    \setlength{\unitlength}{648.00709911bp}%
    \ifx\svgscale\undefined%
      \relax%
    \else%
      \setlength{\unitlength}{\unitlength * \real{\svgscale}}%
    \fi%
  \else%
    \setlength{\unitlength}{\svgwidth}%
  \fi%
  \global\let\svgwidth\undefined%
  \global\let\svgscale\undefined%
  \makeatother%
  \begin{picture}(1,0.47041275)%
    \lineheight{1}%
    \setlength\tabcolsep{0pt}%
    \put(0,0){\includegraphics[width=\unitlength,page=1]{n-band-cobordism.pdf}}%
    \put(0.18750494,0.00512388){\color[rgb]{0,0,0}\makebox(0,0)[t]{\lineheight{1.25}\smash{\begin{tabular}[t]{c}$L$\end{tabular}}}}%
    \put(0.85879403,0.00512388){\color[rgb]{0,0,0}\makebox(0,0)[t]{\lineheight{1.25}\smash{\begin{tabular}[t]{c}$\overline{L}$\end{tabular}}}}%
    \put(0.53703802,0.44493403){\color[rgb]{0,0,0}\makebox(0,0)[t]{\lineheight{1.25}\smash{\begin{tabular}[t]{c}$L\#\overline{L}$\end{tabular}}}}%
    \put(0.48842753,0.09771555){\color[rgb]{0,0,0}\makebox(0,0)[t]{\lineheight{1.25}\smash{\begin{tabular}[t]{c}first band\end{tabular}}}}%
    \put(0.48842752,0.34771292){\color[rgb]{0,0,0}\makebox(0,0)[t]{\lineheight{1.25}\smash{\begin{tabular}[t]{c}second band\end{tabular}}}}%
  \end{picture}%
\endgroup%
 
	\end{center}}}
	\caption{\label{fig:n-band-cobordism} If $L$ is an  $(n+1)$-bridge link, attaching $n$ bands to $L\#\overline{L}$ gives an $(n+1)$ component unlink. This is illustrated for a $3$-bridge $3$-component link $L$.}
\end{figure} 

Although this is not relevant for the proof, note that $k_L$ only depends on $L$, and not the decoration $\Lcal$ of it. In particular,  if $\y\in\HFK^-(\Lcal)$ projects to $\Theta_{\Lcal}$ in $\Abb(\Lcal)$ and $\z\in\HFK^-(\overline{\Lcal})$ projects to $\Theta_{\overline{\Lcal}}$ in $\Abb(\overline{\Lcal})$, it follows from (\ref{eq:composition-of-maps}) and (\ref{eq:composition-thetas}) that 
\begin{align}\label{eq:composition-final}
&\gmap_\z(\fmap_\y(\x))=\var^{k_L}\cdot \x,\quad
\forall\ \x\in\HFK^-(\Kcal).
\end{align}	  
A similar argument implies that $\fmap_\y\circ\gmap_\z$ is also of the form $\var^{k_{\overline{L}}}\cdot Id$. Therefore, to complete the proof of the theorem, it suffices to set $\fmap_{\Kcal\ra \Kcal'}=\fmap_\y$ and $\fmap_{\Kcal'\ra \Kcal}=\gmap_\z$. 
\end{proof}

A similar proof implies the following theorem for general tangle replacements.

\begin{thm}\label{thm:TR}
	If the pointed link $\Kcal'=(K',\ws,\zs)$ is obtained from the pointed link $\Kcal=(K,\ws,\zs)$ by an $(n+1)$- tangle replacement, there are $\F[\var]$-homomorphisms 
	\begin{align*}
		\gmap_{\Kcal\ra \Kcal'}:\HFK^=(\Kcal)\lra \HFK^=(\Kcal')\quad\text{and}\quad	
		\gmap_{\Kcal'\ra \Kcal}:\HFK^=(\Kcal')\lra \HFK^=(\Kcal).	
	\end{align*}	
	so that for some $l,l'\in\{0,1,\ldots,n\}$ we have 
	\begin{align*}
		\gmap_{\Kcal'\ra \Kcal}\circ\gmap_{\Kcal\ra \Kcal'}=\var^{l}\cdot Id\quad\text{and}\quad
		\gmap_{\Kcal\ra \Kcal'}\circ\gmap_{\Kcal'\ra \Kcal}=\var^{l'}\cdot Id.	
	\end{align*}	
\end{thm}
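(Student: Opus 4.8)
The plan is to run the proof of Theorem~\ref{thm:PTR} essentially verbatim, replacing the coefficient algebra $\A^-=\F[\var,\varv]/\langle\varv\rangle$ throughout by $\A^==\F[\var,\varv]/\langle\varv-\var\rangle$, and to check that dropping the orientation hypothesis causes no harm once one works with $\HFK^=$. The conceptual point is that over $\A^=$ the two kinds of basepoints both act by $\var$, so the differential of $\CFK(\,\cdot\,;\A^=)$ records only the total basepoint multiplicity $n_\ws(\phi)+n_\zs(\phi)$; consequently $\HFK^=$ — unlike $\HFK^-$ — depends neither on the orientation of $K$ nor on the partition of the basepoints into $\ws$ and $\zs$. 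This is exactly why one must pass to $\HFK^=$ here: a general (non-oriented) replacement may carry $\partial^+T$ to $\partial^-T$, so a component $T_i'$ of $T'$ can join two points of $\ws\cap\partial B$ or two points of $\zs\cap\partial B$, the marked points need no longer alternate along $K'$, and $\Kcal'$ need no longer be a genuine pointed link — features invisible to $\HFK^=$ but not to $\HFK^-$.

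Concretely, I would: (i) build the Heegaard surface $\Sig=S\cup\Sig'$ and the attaching curves $\betas=\betas_0\cup\{\beta_1,\dots,\beta_n\}$, $\gammas=\gammas_0\cup\{\gamma_1,\dots,\gamma_n\}$ exactly as in Theorem~\ref{thm:PTR}, the only change being that the $\gamma_i$ now cut $S$ into regions $S_i'$ each containing $w_i$ together with one further basepoint of either type, and that the doubled link $L=T\cup T'$ (resp.\ $\overline L=T'\cup T$), still an $(n+1)$-bridge link, need not be coherently oriented by $\ws_0,\zs_0$; (ii) extract from the triples $(\Sig,\alphas,\betas,\gammas,\ws,\zs)$ and $(\Sig,\alphas,\gammas,\betas,\ws,\zs)$ the maps
\[
\fmap:\HFK^=(\Kcal)\otimes_{\F[\var]}\HFK^=(\Lcal)\lra\HFK^=(\Kcal')
\quad\text{and}\quad
\gmap:\HFK^=(\Kcal')\otimes_{\F[\var]}\HFK^=(\overline\Lcal)\lra\HFK^=(\Kcal),
\]
together with their partial evaluations $\fmap_\y,\gmap_\z$; and (iii) from the quadruple $(\Sig,\alphas,\betas,\gammas,\deltas;\ws,\zs)$ and the index-$0$ rectangle degeneration analysis obtain the composition identity
\[
\gmap_\z(\fmap_\y(\x))=\jmap(\x\otimes\hmap(\y\otimes\z)),
\]
just as in~(\ref{eq:composition-of-maps}), where $\jmap(\,\cdot\,\otimes\Theta_\Ucal)=Id$ on $\HFK^=(\Kcal)$ and $\hmap:\HFK^=(\Lcal)\otimes_{\F[\var]}\HFK^=(\overline\Lcal)\to\HFK^=(\Ucal)$ is the map of the decorated $n$-band ribbon cobordism from $L\#\overline L$ to the $(n+1)$-component unlink $U$ of Figure~\ref{fig:n-band-cobordism}.

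Next I would run the band-cobordism estimate. By \cite[Proposition 4.1]{JMZ} (equivalently the argument of \cite[Lemma 2.1]{AE-unknotting}), which is insensitive to whether the coefficients are $\A^-$ or $\A^=$, the $n$-band cobordism also yields a map $\hmap'$ in the opposite direction with $\hmap\circ\hmap'=\hmap'\circ\hmap=\var^n\cdot Id$; since $\HFK^=(\Ucal)$ is torsion-free with a distinguished top generator $\Theta_\Ucal$ (the $\A^=$ analogue of the identification $\HFK^-(\Ucal)=(\F[\var]\oplus\F[\var]\lbr1,0\rbr)^n$, checked the same way), $\hmap$ kills torsion and factors through $\Abb(\Lcal)\otimes_{\F[\var]}\Abb(\overline\Lcal)$, so $\hmap(\Theta_\Lcal\otimes\Theta_{\overline\Lcal})=\var^{l}\cdot\Theta_\Ucal$ for some $l\in\{0,1,\dots,n\}$. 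Plugging this into the composition identity and using $\jmap(\,\cdot\,\otimes\Theta_\Ucal)=Id$ gives $\gmap_\z\circ\fmap_\y=\var^{l}\cdot Id$ as in~(\ref{eq:composition-final}); the symmetric argument gives $\fmap_\y\circ\gmap_\z=\var^{l'}\cdot Id$ for some $l'\in\{0,1,\dots,n\}$. Taking $\y,\z$ to project to $\Theta_\Lcal,\Theta_{\overline\Lcal}$ and declaring the two maps of the statement to be $\fmap_\y$ and $\gmap_\z$ then finishes the proof.

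The only step requiring genuine care — and the main obstacle — is (i): justifying that all of the Floer constructions above remain defined and natural over $\A^=$ even though $\Kcal'$, $\Lcal$, $\overline\Lcal$ may fail to be alternating pointed links. I would settle this in one of two equivalent ways: either argue straight from the differential formula that $\CFK(\,\cdot\,;\A^=)$, its homology, and all triangle/quadrilateral maps depend only on the underlying unoriented link together with the unordered basepoint set $\ws\cup\zs$ and the Maslov grading — so the $\ws/\zs$ labelling, hence orientability of the replacement, is immaterial — or else first apply basepoint-translation isomorphisms along $K'$ to restore alternation and only then perform the triangle counts. In either case everything is inherited from the universal $\F[\var,\varv]$-theory by base change, and the remaining ingredients — the $(n+1)$-bridge structure of $L$ and $\overline L$, the rectangle-degeneration identity, the identity $\jmap(\,\cdot\,\otimes\Theta_\Ucal)=Id$, and the bound $\hmap\circ\hmap'=\var^n\cdot Id$ — are formally identical to the oriented case settled in Theorem~\ref{thm:PTR}.
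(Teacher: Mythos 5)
Your proposal is correct and follows the same route as the paper: the paper's own proof of Theorem~\ref{thm:TR} is a one-line remark that the argument for Theorem~\ref{thm:PTR} goes through with $\A^-$ replaced by $\A^=$, which is precisely what you carry out. Your explanation of why the orientation hypothesis can be dropped — that over $\A^=$ the differential depends only on $n_\ws(\phi)+n_\zs(\phi)$, so the theory is blind to whether the replaced tangle respects the $\ws/\zs$-alternation, and hence the construction of the triple and quadruple diagrams, the $(n+1)$-bridge ribbon cobordism bound of \cite[Proposition 4.1]{JMZ}/\cite[Lemma 2.1]{AE-unknotting}, and the composition identity all carry over unchanged — is exactly the key conceptual point that the paper leaves implicit.
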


Theorem~\ref{thm:main-intro} now follows as a direct
 corollary of Theorem~\ref{thm:PTR} and Theorem~\ref{thm:TR}.

\section{Examples and computations}\label{sec:examples}
In this section, we examine the bounds constructed in the previous sections in a number of examples. 

\begin{ex}\label{ex:torus-knot}
Let $K=T_{p,q}$ be the $(p,q)$ torus knot. It is shown in \cite[Corollary 5.2]{Ef-RTR} that 
\begin{align*}
\Order(T_{p,q})=\min\{p,q\}-1\quad\text{and}\quad \Order'(T_{p,pk+1})=\left\lfloor\frac{p}{2}\right\rfloor.	
\end{align*}	
Therefore, if an oriented $n$-tangle replacement unknots $T_{p,q}$, then  $n\geq \min\{p,q\}$ and  if an $n$-tangle replacement unknots $T_{p,pk+1}$, then $n>\lfloor p/2\rfloor$.
\end{ex}

\begin{ex}
For $K=12n_{404}$ we have $\Order(K)=2=d_{ot}(K,U)$, where $U$ is the unknot.  As noted in \cite[Example 5.4]{AE-unknotting}, the unknotting number $u(K)\in\{2,3\}$ is not known.
\end{ex}

\begin{ex} By  \cite[Example 5.3]{Ef-RTR}, for the $(2,-1)$ cable and the $(2,-3)$ cable of $T_{2,3}$, which are denoted by $K=T_{2,3;2,-1}$ and $K'=T_{2,3;2,-3}$, respectively,
	 we have
\begin{align*}
\Order(K)=\Order(K')=2\quad\text{and}\quad \Order'(K)=\Order'(K')=1=d_t(K,U)=d_t(K',U).		
\end{align*}	
Note that a single $2$-tangle replacement  unknots the $(2,2k+1)$-cable of every knot. 
\end{ex}
\begin{ex}\label{ex:cables-of-knot}
	Fix a knot $K$, and the co-prime pairs of positive integers $(p_1,q_1),\ldots,(p_m,q_m)$. Let $L=K_{p_1,q_1;\ldots;p_m,q_m}$ denote the corresponding iterated cable of $K$. Hom, Lidman and Park have  shown \cite{HLP-cables} that if $K$ is non-trivial, the first inequality below is satisfied, while an explicit tangle replacement gives the second inequality:
	\begin{align*}
		\Order(L)\geq \max\{p_1\cdots p_m(\Order(K)-1)+1,p_1\cdots p_m\}\quad\text{and}\quad\Order'(L)\geq \left\lfloor\frac{p_1}{2}\right\rfloor\cdots\left\lfloor\frac {p_m}{2}\right\rfloor.	
	\end{align*}	
Moreover, if $K\neq T_{2,3}$ is an $L$-space knot, it also follows from \cite{HLP-cables} that
	\begin{align*}
\Order(L)\geq p_1\cdots p_m(\Order(K)+1)-1.	
	\end{align*}	
\end{ex}	
\begin{ex}\label{ex:ten-crossing-knots}
According to {\sf{Knotinfo}} tables \cite{Knotinfo} and the discussion in  \cite[Example 5.4]{Ef-RTR}, among the knots with at most $10$ crossings, all of them  have unknotting number $1$ or thin knot Floer homology, or at least their torsion order is $1$, except 
the knots in 
\[\Acal=\big\{8_{19},10_{124}, 10_{128}, 10_{139}, , 10_{152}, 10_{154}, 10_{161}\big\}.\]
If $K$ is either of the remaining $7$ knots in $\Acal, it$ may be changed to an alternating knot by a single oriented tangle replacement, while $\Order(K)=2$. In particular, 
if $K$ is either of the $L$-space knots  $8_{19}$ and $10_{124}$, $br(K)-1=\Order(K)=2$ and it follows from Example~\ref{ex:branched-double-cover} and  
\begin{align*}
	3p_1\cdots p_m-1\geq \Order(L)\geq \br(L)-1=
	3p_1\cdots p_m-1	
\end{align*}	  
that $d_{ot}(L)\geq  \Order(L)=\br(L)=3p_1\cdots p_m-1$. 
This claim is of course stronger than the claim that the bridge index of $L$ is   $3p_1\cdots p_m$.
\end{ex}
\begin{ex}\label{ex:branched-double-cover}
Given a knot $K\subset S^3$, let $\Sigma(K)$ denote the double cover of $S^3$ branched along $K$. If an $(n+1)$-tangle replacement unknots $K$, it follows that there is a handlebody $S$ of genus $n$ in $\Sigma(K)$ such that surgery on $S$ gives the standard sphere. Moreover, if  an $n$-tangle replacement changes $K$ to a quasi-alternating knot (which has think knot Floer homology and thus $\Order(K)=\Order'(K)=1$), it follows that there is a handlebody $S$ of genus $n$ in $\Sigma(K)$ such that surgery on $S$ gives and $L$-space. As mentioned in the introduction, this may be the source of many questions. In particular, if $n$ is the least integer such that there is a handlebody $S$ of genus $n$ in $\Sigma(K)$ so that surgery on $S$ gives and $L$-space, is it true that $\Order'(K)\geq n+1$?
\end{ex}	
% ------------------------------------------
\bibliographystyle{hamsalpha}

\end{document}